\newtheorem{theorem}{Theorem}
\newtheorem{lemma}{Lemma}[section]
\newtheorem{corollary}{Corollary}[theorem]
\newtheorem*{remark}{Remark}
\newtheorem*{definition}{Definition}
\newcounter{bwt}
\DeclareMathOperator{\Tch}{\mathit{h}}
\DeclareMathOperator{\WeaklyLC}{\mathfrak{M}}
\DeclareMathOperator{\Weakly}{\mathsf{WN}}
\DeclareMathOperator{\Nv}{\mathsf{N}}
\newcommand{\WLC}{\WeaklyLC^{(2)}}
\newcommand{\We}{\Weakly^{(2)}}
\newcommand{\Nve}{\Nv^{(2)}}
\newcommand{\Ag}{{\mathfrak{A}}}
\newcommand{\Aeb}{{\mathcal B}}
\newcommand{\Bg}{{\mathfrak{B}}}
\title[Free weakly Novikov metabelian algebra of infinite rank]%
{Free weakly Novikov metabelian algebra\\ of infinite rank}
\author{Iritan Ferreira dos Santos, Alexey M. Kuz'min}
\begin{document}

\maketitle

\begin{abstract}
An explicit base with multiplication table is obtained for the free weakly Novikov metabelian algebra of infinite rank over an arbitrary field of characteristic~$\neq 2$.

\medskip
\noindent    
{\sc Keywords:} Right symmetric algebra, metabelian algebra, Novikov algebra, free algebra of variety.

\medskip
\noindent
\textit{MSC 2020:} 17A30, 17A50, 17D25, 17D99.
\end{abstract}

%\tableofcontents

\section{Introduction}
Weakly Novikov rings appeared in papers
by E.~Kleinfeld and H.~F.~Smith~\cite{kleinfeld1994generalization,kleinfeld1994right}
as the rings satisfying
\begin{align}
(x,y,z)&=(x,z,y)\quad\textit{(the right symmetry identity)},
\label{e-symm}\\
x(y,z,t)&=(y,z,xt)\quad\textit{(the weakly Novikov identity)},
\label{WN}
\end{align}
where $(x,y,z)=(xy)z-x(yz)$ stands, as usual, for the \textit{associator} in variables~$x,y,z$.
Note that~\eqref{WN} generalizes 
\begin{equation}
x(yz)= y(xz)\quad\textit{(the left commutativity identity)}.
\label{e-comm}
\end{equation}
For more results on nonmultilinear generalizations of identity~\eqref{WN},
see recent papers by Samanta and Hentzel~\cite{samanta2018third,samanta2019third}.

Let $F$ be a field of characteristic distinct from~$2$, 
$\Weakly$ be the variety of algebras over $F$ defined by~\eqref{e-symm} and~\eqref{WN}, and 
$\Nv$ be the Novikov subvariety of $\Weakly$, i.~e. the subvariety distinguished by~\eqref{e-comm}.
Following~\cite{kuz2017basic}, we set $\mathcal V^{(2)}$ to be the metabelian subvartiety of a given variety $\mathcal V$, i.~e. the subvariety distinguished by
\begin{equation}\label{met}
(xy)(zt)=0\quad\textit{(the metabelian identity)}.
\end{equation}

It's known from the paper by Shestakov and Zhang~\cite{shestakov2020solvability} that $\Nve$ is left nilpotent of index not more then~$9$. 
In the present paper, we obtain an explicit base with multiplication table for the free algebra of~$\We$ on the countable set of generators.
As a corollary, we prove that $\We$ and $\Nve$ are both left nilpotent of exact index~$5$. 

    We stress that questions on finding certain effective bases for free algebras in given varieties are strongly motivated by a number of open problems dealing with varieties of all alternative, Jordan, and Maltsev algebras~(see, for ex.~\cite{shestakov2006malcev,shestakov2006universal,shestakovAndZhukavets2007freealter,shestakovAndZhukavets2007freeMalc}).

\section{Preliminary results}
We set, as usual,
$[a,b]=ab-ba$,
$a\circ b=ab+ba$,
and
$ab=aR_b=bL_a$.
In what follows, 
$X=\{x_1,x_2,\dots\}$
stands for a countable set of generators of given free algebra. 
Unless otherwise stated, all algebras are considered over the field $F$ of characteristic~$\neq 2$.
Let $\mathfrak{M}$
be the variety of algebras
defined by~\eqref{WN}.

\subsection{Monomial base for the free algebra of $\WLC$}

\begin{theorem}\label{Th-Base-WLC}
The free algebra 
$\Ag=F_{\WLC}\left<X\right>$ 
possesses a basis formed by all monomials of the form
\begin{equation}\label{monom base}
x_iL_{x_{j_1}}\dots L_{x_{j_n}}R_{x_{k_1}}\dots R_{x_{k_t}},
\end{equation}
such that the variables
$x_{j_1},\dots,x_{j_n}$,
for
$n\geqslant 4$,
are invariant with respect to even permutations and the multiplication in 
$\left(\Ag,\cdot\right)$ is defined by the table
\begin{align*}
x_j\cdot x_i
&=x_iL_{x_j},\\
\left(x_iL_{x_j}\right)\cdot x_k
&= x_iL_{x_j}R_{x_k},\\
x_q\cdot \left(x_iL_{x_j}R_{x_k}\right)&=x_kL_{x_i}L_{x_j}L_{x_q}-x_kL_{x_q}L_{x_i}L_{x_j},\\
\left(x_iL_{x_{j_1}}\dots L_{x_{j_n}}\right)\cdot x_q
&=x_iL_{x_{j_1}}\dots L_{x_{j_n}}R_{x_{q}},\\
x_q\cdot \left(x_iL_{x_{j_1}}\dots L_{x_{j_n}}\right)
&=x_iL_{x_{j_1}}\dots L_{x_{j_n}}L_{x_q},\\
\left(x_iL_{x_{j_1}}\dots L_{x_{j_n}}R_{x_{k_1}}\dots R_{x_{k_t}}\right)\cdot x_q
&=x_iL_{x_{j_1}}\dots L_{x_{j_n}}R_{x_{k_1}}\dots R_{x_{k_t}}R_{x_{q}},
\end{align*}
where all possible nonzero products of base monomials are given.
\end{theorem}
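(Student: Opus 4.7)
The plan is to establish the basis in three stages: spanning, the $A_n$-symmetry of the $L$-block, and linear independence via a model algebra.

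First, I would show the monomials \eqref{monom base} span $\Ag$. Arguing by induction on degree, the metabelian identity \eqref{met} forces every non-zero monomial to factor as $x_i\cdot M$ or $M\cdot x_i$ with $x_i$ a generator, so by induction every non-zero monomial takes the form $x_i\omega$ with $\omega$ a word in $L_{x_j},R_{x_k}$. The central relation that brings $\omega$ into canonical form (all $L$'s preceding all $R$'s) is
\[
x_q\cdot\bigl((x_jx_i)x_k\bigr)=x_kL_{x_i}L_{x_j}L_{x_q}-x_kL_{x_q}L_{x_i}L_{x_j},
\]
which follows from \eqref{WN} in the form $x_q(x_j,x_i,x_k)=(x_j,x_i,x_qx_k)$ upon collapsing the associators via \eqref{met}. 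Iterating this identity removes every $R$ standing to the left of an $L$.

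Second, to obtain the even-permutation invariance of the $L$-part for $n\geq 4$, I would combine two families of 3-cycle identities coming from \eqref{WN} and \eqref{met}. The first is
\[
a\bigl(b(c(de))\bigr)=b\bigl(c(a(de))\bigr),
\]
obtained by applying \eqref{WN} as $a(b,c,de)=(b,c,a(de))$ and using \eqref{met} to kill the ``product-of-products'' terms in both associators; in $L$-notation it produces a 3-cycle on three consecutive outer $L$-positions while fixing the one immediately below. Applied at every admissible depth of the $L$-chain, this yields the adjacent 3-cycles on positions $(i,i+1,i+2)$ for $i=2,\ldots,n-2$. The second family uses \eqref{WN} with the product $ab$ in place of $x$: since $(ab)(c,d,e)=(c,d,(ab)e)$, and \eqref{met} kills all resulting terms except one, one obtains $c\bigl(d((ab)e)\bigr)=0$; after rewriting $d\bigl((ab)e\bigr)$ by the identity of the first stage, this collapses to $eL_bL_aL_dL_c=eL_dL_bL_aL_c$, i.e.\ the 3-cycle $(1,2,3)$ on the innermost three $L$-positions (extended to arbitrary $n\geq 4$ by appending outer $L$'s, which are unaffected by the identity). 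Since the adjacent 3-cycles $(i,i+1,i+2)$ for $i=1,\ldots,n-2$ generate $A_n$, this supplies the required invariance.

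Third, for linear independence I would construct the vector space $\Bg$ spanned by the symbols in \eqref{monom base}, modulo the action of $A_n$ on $(x_{j_1},\ldots,x_{j_n})$ for $n\geq 4$, define multiplication by the table in the statement, and verify that this yields a well-defined algebra satisfying \eqref{WN} and \eqref{met}. The universal property then gives a surjection $\Ag\to\Bg$, and since composing it with the reduction procedure of the first stage is the identity on basis symbols, the surjection is an isomorphism.

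I expect the main obstacle to be the $A_n$-invariance step: one must verify that the two families of 3-cycles, applied at each admissible depth of the $L$-chain, genuinely produce every adjacent 3-cycle $(i,i+1,i+2)$, including the boundary one $(1,2,3)$ coming from the second family. Once this is in hand, well-definedness of the table on $\Bg$ and verification of \eqref{WN} on $\Bg$ reduce to a finite case analysis by the shapes $(n,t)$ of the factors, with \eqref{met} ensuring that all unlisted products vanish.
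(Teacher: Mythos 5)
Your proposal is essentially the paper's own argument: the paper derives the same relations by specializing one variable of \eqref{WN} into $\Ag^2$ --- namely $\Ag^2R_tL_zL_y=0$ (your second family before rewriting) and the two adjacent $3$-cycle families $\Ag^2\left[L_zL_y,L_x\right]=0$ and $\Ag\left[L_yL_z,L_t\right]L_u=0$, which generate the even permutations --- and it proves linear independence exactly as you propose, by verifying \eqref{WN} and \eqref{met} for the model algebra defined by the multiplication table. The only point to tighten is the spanning step: iterating the single degree-four rewriting does not by itself dispose of patterns such as $\Ag^3R_tL_x$ (e.g.\ the monomial $x_iL_{x_j}L_{x_k}R_{x_q}L_{x_s}$), for which the paper records the separate consequences $\Ag^2R_zR_tL_x=0$ and $\Ag^2L_yR_tL_x=0$ of the specializations $y,z\in\Ag^2$ of \eqref{WN}; your framework produces these, but they need to be stated explicitly rather than subsumed under ``iteration.''
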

\begin{proof}
Throughout this proof, we use metabelian identity~\eqref{met} with no comments.
Consider all possible restrictions of left commutativity identity~\eqref{WN} such that one of its variables is assumed to be taken form $\Ag^2$.
First, by
$x\in \Ag^2$, we have
\begin{equation}\label{rel-RLL}
\Ag^2 R_tL_zL_y=0.
\end{equation}
Further, by setting
$y\in \Ag^2$, we get
\[
\Ag^2 R_zR_tL_x=0.
\]
Moreover, in view of~\eqref{rel-RLL},
$z\in \Ag^2$, yields
\[
\Ag^2 [L_y,R_t]L_x=\Ag^2 L_yR_tL_x=0.
\]
Consequently,
\begin{equation}\label{rel-A3RL}
\Ag^3 R_tL_x=0.
\end{equation}
Furthermore, 
$t\in \Ag^2$ implies
\begin{equation}\label{rel-LLL-3-cycl}
\Ag^2 \left[L_zL_y,L_x\right]=0.
\end{equation}
Finally, by~\eqref{rel-RLL} and~\eqref{WN}, we obtain 
\[
xL_yL_zL_tL_u=-(z,y,x)L_tL_u=-(z,y,tx)L_u=xL_tL_yL_zL_u,
\]
i.~e.
\begin{equation}\label{rel-A-LLLL}
\Ag \left[L_yL_z,L_t\right]L_u=0.
\end{equation}
Therefore, by virute of~\eqref{rel-RLL}--\eqref{rel-A-LLLL}, $\Ag$ is spanned by the monomials of type~\eqref{monom base}.

To complete the proof it remains to check that identity~\eqref{WN} holds in $\left(\Ag,\cdot\right)$.
Indeed, on generators of $\Ag$, we have
\[
x_q\cdot \left(x_j,x_i,x_k\right)=x_q\cdot \left(x_iL_{x_j}R_{x_k}-x_kL_{x_i}L_{x_j}\right)=
-x_kL_{x_q}L_{x_i}L_{x_j}=\left(x_j,x_i,x_q\cdot x_k\right);
\]
otherwise,~\eqref{WN} is an immediate consequence of relations~\eqref{rel-RLL}-\eqref{rel-LLL-3-cycl}.
\end{proof}

\begin{corollary}\label{Th-Lie-Jord-Nilp}
Every Lie-nilpotent or Jordan-nilpotent subvariety of index~$n$ in $\WLC$ is nilpotent of index not more then~$2n+1$.
\end{corollary}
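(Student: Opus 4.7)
The aim is to prove $\Ag^{2n+1}=0$ in $\Ag=F_{\mathcal V}\langle X\rangle$, where $\mathcal V\subseteq\WLC$ is Lie- or Jordan-nilpotent of index $n\ge 2$. By Theorem~\ref{Th-Base-WLC}, $\Ag$ is multi-graded and spanned by the explicit basis described there, so the claim reduces to showing that every basis monomial
\[
m=x_iL_{x_{j_1}}\cdots L_{x_{j_r}}R_{x_{k_1}}\cdots R_{x_{k_s}},\qquad r+s=2n,
\]
vanishes in $\mathcal V$; by pigeonhole, $r\ge n$ or $s\ge n$. The central observation is that, since $\Ag^2$ is a two-sided ideal, iterated commutators and iterated Jordan products of $w\in\Ag^2$ with $a_1,\ldots,a_k\in\Ag$ factor as operators:
\[
[[\cdots[w,a_1],a_2]\cdots,a_k]=w\prod_{i=1}^{k}(R_{a_i}-L_{a_i}),\quad (\cdots((w\circ a_1)\circ\cdots)\circ a_k)=w\prod_{i=1}^{k}(R_{a_i}+L_{a_i}).
\]
Substituting $w=uv$ into the defining nilpotency identity of $\mathcal V$ yields, by linearity, the operator identity
\[
w\,\prod_{i=1}^{n-1}(R_{a_i}\mp L_{a_i})=0\qquad(w\in\Ag^2,\ a_i\in\Ag),
\]
with $-L$ in the Lie case and $+L$ in the Jordan case.

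In the case $r\ge n+1$ one writes $m=w\cdot L_{x_{j_2}}\cdots L_{x_{j_r}}R_{x_{k_1}}\cdots R_{x_{k_s}}$ with $w=x_iL_{x_{j_1}}\in\Ag^2$, applies the operator identity above with $a_q=x_{j_{q+1}}$, and right-multiplies by the remaining $n$ operators. Expanding $\prod(R\mp L)$ into $2^{n-1}$ summands, the ``all-$L$'' choice contributes $\pm m$, while every other summand inserts at least one $R$ among the first $n-1$ slots. A short case analysis, based on the position of the rightmost such $R$, shows that the resulting operator word, applied to $w\in\Ag^2$, necessarily contains either an $RLL$-substring --- killed by~\eqref{rel-RLL} --- or an $RL$-substring starting at a position $\ge 2$ --- killed by~\eqref{rel-A3RL}; in either case the summand vanishes, forcing $m=0$. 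The dual case $s\ge n+1$ runs analogously with $\hat w=x_iL_{x_{j_1}}\cdots L_{x_{j_r}}R_{x_{k_1}}\cdots R_{x_{k_{s-n}}}\in\Ag^{n+1}\subseteq\Ag^3$: applying the operator identity to the tail indices $x_{k_{s-n+1}},\ldots,x_{k_{s-1}}$ (right-multiplied by $R_{x_{k_s}}$), the patterns $L^cR^{n-1-c}$ ($0\le c\le n-1$) are the only ones surviving~\eqref{rel-A3RL}, and for $c\ge 1$ the left-multiplication $\hat w\cdot L_{(\cdot)}$ is an unlisted product of the multiplication table of Theorem~\ref{Th-Base-WLC} and hence vanishes, leaving the sole term $\hat w R^n=m$.

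The unique borderline subcase $r=s=n$ is the technical heart: neither route above yields a single-term conclusion, and one is left with a bounded number of residual mixed terms of the schematic form $wR_aL_bR^{2n-3}$ which are not themselves basis monomials. One applies rule~3 of Theorem~\ref{Th-Base-WLC}, namely $x_q\cdot(x_iL_{x_j}R_{x_k})=x_kL_{x_i}L_{x_j}L_{x_q}-x_kL_{x_q}L_{x_i}L_{x_j}$, to convert each such residual term into a linear combination of basis monomials with $\ge n+1$ left-multiplications, which then vanish by the case already settled. Relation~\eqref{rel-LLL-3-cycl} together with the even-permutation invariance of the $L$-block from Theorem~\ref{Th-Base-WLC} (applicable once the $L$-count is at least four) organise the cancellations. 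The principal obstacle is exactly this bookkeeping in the borderline subcase; once past it, the Jordan-nilpotent case follows mutatis mutandis, and the low-index case $n=2$, where Lie-nilpotency degenerates to commutativity, is disposed of by a short direct argument combining commutativity with identities~\eqref{e-symm} and~\eqref{met} to obtain in fact $\Ag^4=0$.
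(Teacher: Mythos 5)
Your core mechanism --- expanding $w\prod_i(R_{a_i}\mp L_{a_i})=0$ for $w\in\Ag^2$ and killing every mixed summand with~\eqref{rel-RLL} and~\eqref{rel-A3RL} --- is exactly the paper's, and your cases $r\geqslant n+1$ and $s\geqslant n+1$ are sound for $n\geqslant 3$. The genuine gap is the subcase $r=s=n$: you declare it ``the technical heart'', but you never actually prove that $x_iL_{x_{j_1}}\dots L_{x_{j_n}}R_{x_{k_1}}\dots R_{x_{k_n}}$ vanishes. The mechanism you sketch for it is not credible as stated: the table rule $x_q\cdot\left(x_iL_{x_j}R_{x_k}\right)=x_kL_{x_i}L_{x_j}L_{x_q}-x_kL_{x_q}L_{x_i}L_{x_j}$ applies only to the degree-$3$ element $x_iL_{x_j}R_{x_k}$ and outputs terms with exactly three $L$'s, so it cannot ``convert each residual term into a combination with $\geqslant n+1$ left multiplications'' for general $n$; and neither~\eqref{rel-LLL-3-cycl} nor the even-permutation invariance enters the matter. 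As written, the proof does not establish $\Ag^{2n+1}=0$.

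The borderline case is in fact spurious, and the paper's set-up shows why: all your dual case needs is that the prefix $\hat w$ standing before the last $n$ right multiplications lies in $\Ag^3$, not that $s\geqslant n+1$. For a degree-$(2n+1)$ basis monomial with $s\geqslant n$ that prefix has degree $1+r+s-n=n+1\geqslant 3$, so the two cases $r\geqslant n+1$ and $s\geqslant n$ already exhaust everything. Concretely, the paper proves the two operator relations $\Ag^2L_{y_1}\dots L_{y_n}=0$ (from $\Ag^2H_{x_1}\dots H_{x_{n-1}}L_y=0$, mixed terms dying by~\eqref{rel-RLL} and~\eqref{rel-A3RL}) and $\Ag^3R_{y_1}\dots R_{y_n}=0$ (from $\Ag^3R_yH_{x_1}\dots H_{x_{n-1}}=0$, mixed terms dying by~\eqref{rel-A3RL} alone); the second kills $x_iL^nR^n$ outright. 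Two smaller slips: your claim that $\hat w\cdot L$ is always ``an unlisted product'' fails for $\hat w=x_iL_{x_j}R_{x_k}$, whose left action is listed and nonzero (this bites only at $n=2$, which you defer anyway); and your parenthetical $\Ag^4=0$ for $n=2$ is unsupported, since $\Ag^2R_aL_b$ is killed by neither~\eqref{rel-RLL} nor~\eqref{rel-A3RL} --- fortunately only $\Ag^5=0$ is needed there.
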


\begin{proof}
Let us prove that, for $\Bg\in \WLC$, 
$H_x=R_x-L_x$,
and
$\varTheta_x=R_x+L_x$, each of the relations 
\[
\Bg^2 H_{x_{1}}\dots H_{x_{n-1}}=0,
\quad
\Bg^2 \varTheta_{x_{1}}\dots \varTheta_{x_{n}-1}=0
\]
yields
$\Bg^{2n+1}=0$.
In the case $n=1$, the affirmation is trivial in view of~\eqref{rel-RLL}.
Further, we set
\[
R^n=R_{x_1}\dots R_{x_n},
\quad
L^n=R_{x_1}\dots L_{x_n},
\quad
H^n=H_{x_1}\dots H_{x_n},\quad n\geqslant 2.
\]
By virtue of~\eqref{rel-RLL} and~\eqref{rel-A3RL}, the relations 
\[
\Bg^2 H^nL_y=0,\quad
\Bg^3 R_yH^n=0
\]
yield, respectively,
\[
\Bg^2 L^{n+1}=0,\quad
\Bg^3 R^{n+1}=0.
\]
Therefore, by Theorem~\ref{Th-Base-WLC},
\[
\Bg^3 T_{x_1}\dots T_{x_{2n}}=0,
\]
for all
$T_{x_i}\in\left\{R_{x_i},L_{x_i}\right\}$.
\end{proof}

\begin{corollary}\label{Th-Alt-Symm-Nilp}
The flexible and the antiflexible subvarieties of $\WLC$ are both nilpotent of index not more than~$5$.
Moreover, for $\Bg\in \WLC$ and $n\geqslant 2$, the identity
\begin{equation}\label{eq-weak-flex}
\left(\Bg^n,x,y\right)
%={(-1)}^\varepsilon 
%\left(x,\Bg^n,y\right)
=\pm\left(y,x,\Bg^n\right)
%\quad \varepsilon\in\left\{0,1\right\}
\end{equation}
implies
$\Bg^{n+3}=0$.
\end{corollary}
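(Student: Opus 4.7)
The plan is to prove the ``moreover'' clause first and then specialize to $n=2$: both the flexibility identity $(x,y,z)=-(z,y,x)$ and the antiflexibility identity $(x,y,z)=(z,y,x)$ in particular imply $(\Bg^2,x,y)=\pm(y,x,\Bg^2)$, so $\Bg^5=0$ for every flexible or antiflexible $\Bg\in\WLC$ will follow as the case $n=2$. The first step for general $n\geq 2$ is to simplify the hypothesis drastically using metabelianness: for any $a\in\Bg^n$ and any $x,y\in\Bg$, both $a(xy)$ and $(yx)a$ are products of two elements of $\Bg^2$ and hence vanish by~\eqref{met}. Consequently $(a,x,y)=\pm(y,x,a)$ collapses to $(ax)y=\mp y(xa)$, i.e.\ to the operator identity
\[
aR_xR_y=\mp aL_xL_y\qquad(a\in\Bg^n,\;x,y\in\Bg),
\]
which I will denote by $(\ast)$.

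The heart of the argument combines $(\ast)$ with the monomial relations from the proof of Theorem~\ref{Th-Base-WLC}, specifically~\eqref{rel-RLL} ($\Bg^2R_tL_zL_y=0$) and~\eqref{rel-A3RL} ($\Bg^3R_tL_x=0$). By Theorem~\ref{Th-Base-WLC} it suffices to show that every basis monomial $u=x_iL_{x_{j_1}}\cdots L_{x_{j_k}}R_{x_{l_1}}\cdots R_{x_{l_m}}$ with $k+m=n+2$ is zero; I split on $m$, writing $L^{k}$, $R^{m}$ for the corresponding blocks. For $m\geq 3$, applying $(\ast)$ to the prefix $a=x_iL^{k}R^{m-2}\in\Bg^{n+1}$ turns the terminal $RR$ into $\mp LL$, producing a monomial with an $RLL$-tail whose preceding prefix has $n\geq 2$ generators, and~\eqref{rel-RLL} then gives $u=0$. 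For $m=2$ the same move reduces $u$ to $\mp x_iL^{n+2}$, i.e.\ to the case $m=0$. For $m=1$, applying $(\ast)$ with $a=x_iL^{n-1}\in\Bg^n$ converts the last two $L$'s of the $L$-block to $R$'s, reaching the shape $L^{n-1}R^3$ already handled by $m\geq 3$. For $m=0$, the analogous move produces a monomial of shape $L^{n-1}RRL$ whose terminal $RL$ is preceded by a prefix of $n+1\geq 3$ generators, so~\eqref{rel-A3RL} forces $u=0$.

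The actual computations are mechanical --- each step is one use of $(\ast)$ followed by a single invocation of~\eqref{rel-RLL} or~\eqref{rel-A3RL}. The only real care needed is to check that every prefix we want to feed into $(\ast)$ or into a Theorem~\ref{Th-Base-WLC} relation genuinely lies in the corresponding $\Bg^s$; this is where setting up the right case split on $m$ is essential, and it is the main potential pitfall. Once all four cases are verified, Theorem~\ref{Th-Base-WLC} yields $\Bg^{n+3}=0$, which is the moreover clause, and the statement about the flexible and antiflexible subvarieties follows by specialization to $n=2$.
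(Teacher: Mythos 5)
Your proposal is correct and takes essentially the same route as the paper: both reduce hypothesis~\eqref{eq-weak-flex} via~\eqref{met} to the operator relation $\Bg^n R_xR_y=\mp\Bg^n L_xL_y$ and then combine it with~\eqref{rel-RLL},~\eqref{rel-A3RL}, and the normal form of Theorem~\ref{Th-Base-WLC} to annihilate every monomial of degree $n+3$. The paper merely packages your four-case analysis on the number of $R$'s into two short chains of operator identities ($LLR\to RRR\to RLL$ and $LRR\to LLL\to RRL$); the content is identical.
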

\begin{proof}
By virtue of~\eqref{met}, identity~\eqref{eq-weak-flex} yields
\[
\Bg^nR_xR_y=\mp \Bg^nL_xL_y.
\]
%\begin{align*}
%\Bg^n\bigl(\left[L_x,R_y\right]-{(-1)}^\varepsilon R_xR_y\bigr)&=0,\\
%\Bg^n\bigl(\left[L_x,R_y\right]+{(-1)}^\varepsilon L_yL_x\bigr)&=0,
%\end{align*}
Thus, in view of~\eqref{rel-RLL}, we have
\[
\Bg^nL_xL_yR_z=\mp\Bg^nR_xR_yR_z=\Bg^nR_xL_yL_z=0.
\]
Similarly, by~\eqref{rel-A3RL}, we obtain
\[
\Bg^nL_xR_yR_z=\mp\Bg^nL_xL_yL_z=\Bg^nR_xR_yL_z=0.
\]
Therefore, by Theorem~\ref{Th-Base-WLC},
$\Bg^{n+3}=0$.
\end{proof}

\section{Main Theorem}

\subsection{Auxiliary identities}

Let
$\Ag=F_{\We}\left<X\right>$ 
be the free algebra of the variety $\We$ on the countable set 
$X=\{x_1,x_2,\dots\}$ of generators;
the symbol $T_x\in\{R_x,L_x\}$ is used as a common denotation for the 
operators
$R_x$ and $L_x$
of right and left multiplication by~$x\in X$, respectively.

\begin{lemma}\label{Lm_WN_def}
The free algebra $\Ag$ satisfies the identities
\begin{align}
\label{We'}
x\cdot(y,z,t)&=\left(y,xz,t\right),\\
\label{We''}
(x,y,zt)&=(x,zy,t),\\
\label{We'''}
x\cdot y(zt)&=-(x,zy,t),\\
\label{We'v}
x(yz)\cdot t&=\bigl(x,[y,z],t\bigr)+(y,xz,t),\\
\label{We v}
\left(x,y,z\right)\cdot t&=\Tch(x,y,z,t)+\bigl(x,y\circ z,t\bigr),
\end{align}
where
\[
\Tch(x,y,z,t)\stackrel{\mathrm{def}}{=}
(xy,z,t)-(y,xz,t)-2(x,yz,t).
\]
\end{lemma}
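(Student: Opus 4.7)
My plan is to prove the five identities in the stated order, since \eqref{We'} and \eqref{We''} are the fundamental rearrangements obtained by combining the weakly Novikov identity~\eqref{WN} with right symmetry~\eqref{e-symm}, while the remaining three follow by straightforward associator bookkeeping once the metabelian identity~\eqref{met} is brought in to kill products of the form $\Ag^2\cdot\Ag^2$.

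For \eqref{We'}, I would argue via the chain
\[
(y,xz,t)\stackrel{\eqref{e-symm}}{=}(y,t,xz)\stackrel{\eqref{WN}}{=}x(y,t,z)\stackrel{\eqref{e-symm}}{=}x(y,z,t).
\]
For \eqref{We''}, the same symmetry trick yields $(x,y,zt)=z(x,y,t)$ directly from~\eqref{WN}, while separately $(x,zy,t)=(x,t,zy)=z(x,t,y)=z(x,y,t)$; equating the two gives the identity. Identity \eqref{We'''} is the first place where metabelian intervenes: by definition of the associator, $(x,y,zt)=(xy)(zt)-x\cdot y(zt)$, and the first summand vanishes by~\eqref{met}, so $x\cdot y(zt)=-(x,y,zt)=-(x,zy,t)$ using \eqref{We''}.

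For \eqref{We'v}, I would expand $x(yz)\cdot t=(x,yz,t)+x\cdot\bigl((yz)t\bigr)$, then rewrite $(yz)t=y(zt)+(y,z,t)$, so that $x\cdot\bigl((yz)t\bigr)=-(x,zy,t)+(y,xz,t)$ by \eqref{We'''} and \eqref{We'}; the conclusion follows by absorbing $(x,yz,t)-(x,zy,t)$ into $(x,[y,z],t)$ via linearity of the associator in its middle slot. Finally, for \eqref{We v}, I would expand $(x,y,z)\cdot t=\bigl((xy)z\bigr)\cdot t-\bigl(x(yz)\bigr)\cdot t$, use~\eqref{met} to reduce $\bigl((xy)z\bigr)\cdot t$ to $(xy,z,t)$, and substitute \eqref{We'v} for the second term; collecting the four resulting associators then matches $\Tch(x,y,z,t)+(x,y\circ z,t)$ on the nose.

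The main obstacle is purely bookkeeping: once \eqref{We'} and \eqref{We''} are in hand, no genuinely new combinatorial idea is required, but one must carefully track the order of arguments inside each associator and the signs that appear when combinations such as $y\circ z$ and $[y,z]$ are recognized in the final expressions.
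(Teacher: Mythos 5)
Your proof is correct and follows essentially the same route as the paper: identities \eqref{We'}--\eqref{We'''} are derived by the identical combinations of \eqref{e-symm}, \eqref{WN}, and \eqref{met}, and your treatment of \eqref{We'v} and \eqref{We v} differs only in that you expand the associators by hand where the paper invokes Teichm\"uller's identity (which is the same computation packaged differently) and in the order in which the last two identities are obtained. No gap.
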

\begin{proof}
By virtue of~\eqref{WN} and~\eqref{e-symm}, we have
\[
x\cdot (y,z,t)= x\cdot (y,t,z)=(y,t,xz)=(y,xz,t).
\]
Thus,~\eqref{We'} is proved. 
Further, combining~\eqref{WN} and~\eqref{We'}, we get~\eqref{We''}:
\[
(x,y,zt)=z\cdot (x,y,t)=(x,zy,t).
\]
Furthermore, by~\eqref{met} and~\eqref{We''}, we obtain~\eqref{We'''}: 
\[
x\cdot y(zt)=-(x,y,zt)=-(x,zy,t).
\]
Now, recall that every nonassociative ring satisfies the Teichm\"uller's identity 
\[
x\cdot (y,z,t)+\left(x,y,z\right)\cdot t
=(xy,z,t)-(x,yz,t)+(x,y,zt).
\]
Hence,  using~\eqref{We'} and~\eqref{We''}, we have
\[
\left(x,y,z\right)\cdot t
=(xy,z,t)-(y,xz,t)
-\bigl(x,\left[y,z\right],t\bigr).
\]
Consequently, to prove~\eqref{We'v} it remains to observe that
\[
\left(x,y,z\right)\cdot t-\left(xy,z,t\right)
=-x(yz)\cdot t,
\]
in view of~\eqref{met}.
Moreover, taking into account the denotations, introduced above,
\begin{multline*}
\left(x,y,z\right)\cdot t
=(xy,z,t)-(y,xz,t)
+\bigl(x,\left[z,y\right],t\bigr)=\\
=(xy,z,t)-(y,xz,t)-2(x,yz,t)
+\bigl(x,y\circ z,t\bigr)=\\
=\Tch(x,y,z,t)+\bigl(x,y\circ z,t\bigr),
\end{multline*}
we complete the proof of~\eqref{We v}.
\end{proof}

\begin{lemma}\label{lm monom A5}
A monomial
$\mu\in \Ag^5$
can be nonzero only if it has the form
$\mu=wR_xR_yR_z$
for some
$w\in \Ag^2$ 
and $x,y,z\in X$
\end{lemma}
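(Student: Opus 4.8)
The plan is to peel a degree-$5$ monomial down to a pattern $\Ag^2\,T_aT_bT_c$ and then to show that every such pattern except $\Ag^2R_aR_bR_c$ vanishes. Since $\We\subseteq\WLC$, Theorem~\ref{Th-Base-WLC} applies verbatim: every monomial of $\Ag$ is a combination of the basis monomials $x_iL^nR^t$, and the relations \eqref{rel-RLL}, \eqref{rel-A3RL} together with $\Ag^2R_aR_bL_c=\Ag^2L_aR_bL_c=0$ from its proof are available. Writing a degree-$5$ monomial as an outer product $\mu=\mu_1\mu_2$ and using \eqref{met}, only the splittings $(\deg\mu_1,\deg\mu_2)\in\{(1,4),(4,1)\}$ survive, so $\mu=\nu T_e$ with $\nu$ of degree $4$ and $T_e\in\{L_e,R_e\}$. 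Reducing $\nu$ to one of the four basis shapes $x_iR^3,\,x_iLR^2,\,x_iL^2R,\,x_iL^3$ and splitting off the first operator rewrites $\mu$ as $w\,T_aT_bT_c$ with $w\in\Ag^2$ of degree $2$. The strings $T_aT_bT_c$ that occur are $R_aR_bR_c$ (the desired form), the two $\WLC$-trivial strings $R_aR_bL_c$ and $L_aR_bL_c$, and the three strings $L_aL_bL_c$, $L_aL_bR_c$, $L_aR_bR_c$, so the lemma reduces to the three identities
\[
\Ag^2L_aL_bL_c=0,\qquad \Ag^2L_aL_bR_c=0,\qquad \Ag^2L_aR_bR_c=0 .
\]

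The decisive identity is the first one, and this is where the right-symmetry law \eqref{e-symm}, unavailable in $\WLC$, must enter. For $w\in\Ag^2$ I would write $wL_aL_bL_c=c\cdot\bigl(b\cdot(a\cdot w)\bigr)$ and read the inner factor $b\cdot(a\cdot w)$ as $y(zt)$ with $y=b$, $z=a$, $t=w$; identity \eqref{We'''} then gives $wL_aL_bL_c=-(c,ab,w)$. Applying \eqref{e-symm} to exchange the last two arguments yields $-(c,w,ab)$, and since $w\cdot(ab)=0$ and $(c\cdot w)\cdot(ab)=0$ by \eqref{met}, the associator $(c,w,ab)$ vanishes. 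Hence $\Ag^2L_aL_bL_c=0$; in particular $\Ag L_{x_1}L_{x_2}L_{x_3}L_{x_4}=0$, i.e. $\We$ is left nilpotent of index $5$.

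With the pure-$L$ string annihilated, the two mixed strings fall to it. The helper identity $(x,W,y)=(x,y,W)=-W\,L_yL_x$ for $W\in\Ag^2$ (again immediate from \eqref{e-symm} and \eqref{met}) combines with \eqref{We'v} and Teichm\"uller's identity to expand $wL_aL_bR_c$ and $wL_aR_bR_c$ into associators, each of which I would collapse—using \eqref{e-symm}, \eqref{met}, and the $\WLC$ relations \eqref{rel-RLL}, \eqref{rel-A3RL}—either into a multiple of the now-vanishing string $\Ag^2L_xL_yL_z$ or into one of the patterns $\Ag^2R_xL_yL_z$, $\Ag^2R_xR_yL_z$, $\Ag^2L_xR_yL_z$. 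Concretely this should produce $wL_aL_bR_c=-wL_aL_cL_b-wL_bL_cL_a$ and $wL_aR_bR_c=-wL_bL_aR_c$, so both are $0$ and the lemma follows.

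The obstacle I expect is the circularity that surfaces if one attacks the three strings simultaneously: a direct expansion of $\Ag^2L_aL_bR_c$ or $\Ag^2L_aR_bR_c$ regenerates strings of the same three shapes, together with the intermediate pattern $\Ag^2R_aL_bR_c$, which one must separately observe is killed by \eqref{e-symm} and \eqref{rel-RLL}. What breaks the loop is that $\Ag^2L_aL_bL_c$ admits the outright one-line annihilation above; once it is in hand the remaining two strings are no longer self-referential and genuinely reduce to it. The only real care needed is the bookkeeping in the associator expansions, so that every term is visibly one of the patterns already known to vanish.
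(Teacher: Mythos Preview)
Your argument is correct and follows the same strategy as the paper: show that $w\,T_aT_bT_c=0$ for $w\in\Ag^2$ whenever the operator string is not $R_aR_bR_c$. Two remarks on the packaging. First, the detour through Theorem~\ref{Th-Base-WLC} is unnecessary: the same metabelian peeling you apply once (to split off $T_e$) can be applied twice more to reach $w\,T_cT_dT_e$ directly as a \emph{single} term, which is exactly what the paper does; this sidesteps the linear-combination issue and the need to recover the pattern $R_aL_bR_c$ that your basis reduction skips but your $LRR$ computation then requires. Second, the paper dispatches $LLL$ using only \eqref{met} and \eqref{We'} (your invocation of \eqref{e-symm} there is not wrong, just redundant, since \eqref{e-symm} is already baked into \eqref{We'}), and handles the mixed strings by direct associator expansions rather than via \eqref{We'v} and your helper $(x,W,y)=-WL_yL_x$; the manipulations are equivalent in substance, and your anticipated ``circularity'' is exactly the dependency the paper resolves by proving the strings in the order $RRL$, $RLL$, $LLL$, $RLR$, $LRL$, $LLR$, $LRR$.
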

\begin{proof}
Let us consider consecutively all possible cases for 
$\mu\neq wR_xR_yR_z$.
First, by~\eqref{WN} and~\eqref{met}, we have
\begin{align}
\label{RRL=0}
 wR_xR_yL_z&=z(w,x,y)-(w,x,zy)=0,\\
\label{RLL=0}
wR_xL_yL_z&=w(z,y,x)-(z,y,wx)=0.
\end{align}
Similarly, by~\eqref{met} and~\eqref{We'}, we get
\begin{equation}
\label{LLL=0}
wL_xL_yL_z=(y,zx,w)-z(y,x,w)=0.
\end{equation}
Further, combining~\eqref{WN},~\eqref{met},~\eqref{We'} with~\eqref{RRL=0} and~\eqref{RLL=0}, we proceed 
\begin{align}
\label{RLR=0}
wR_xL_yR_z&=wR_xR_zL_y+(y,wx,z)-w(y,x,z)=0,\\
\label{LRL=0}
wL_xR_yL_z&=wR_yL_xL_z+z(x,w,y)-(x,w,zy)=0.
\end{align}
Now, by~\eqref{We'},~\eqref{RLL=0}, and~\eqref{LRL=0}, we obtain
\begin{equation}
\label{LLR=0}
wL_xL_yR_z=wL_xR_zL_y+w[L_y,R_z]L_x+(y,xw,z)-x(y,w,z)=0.
\end{equation}
Finally, using~\eqref{RLR=0},~\eqref{LLR=0}, and taking into account~\eqref{e-symm}, we prove
\begin{equation}
\label{LRR=0}
wL_xR_yR_z=wR_yL_xR_z-
 wL_yL_xR_z+(x,w,y)z-
(x,y,w)z=0.
\end{equation}
\end{proof}

\begin{lemma}\label{lm Asocc 4 weak id}
The T-ideal 
$\bigl<(a,bc,d)\bigr>^{\mathrm{T}}$
is spanned by the associators 
$(x_i,x_jx_k,x_{\ell})$
taken only on generators
$x_i,x_j,x_k,x_\ell\in X$
of
$\Ag$.
\end{lemma}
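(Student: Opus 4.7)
The plan is to prove this by establishing both inclusions: the inclusion $V := \mathrm{span}_F\{(x_i, x_j x_k, x_\ell)\} \subseteq \langle(a,bc,d)\rangle^{\mathrm T}$ is immediate, since each generator-associator is itself a substitution of the polynomial $(a,bc,d)$. For the reverse inclusion, I would verify that (i) every substitution $(u, vw, s)$ with $u, v, w, s \in \Ag$ lies in $V$, and (ii) $V$ is itself a two-sided ideal of $\Ag$; together these force the T-ideal to coincide with $V$.

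For (i), by multilinearity I may assume $u, v, w, s$ are monomials. Since $vw \in \Ag^2$ always, the metabelian identity \eqref{met} forces $(u, vw, s) = 0$ whenever $u \in \Ag^2$ or $s \in \Ag^2$; thus the outer slots must be generators $x_i, x_\ell$. Next, assume $vw$ has length $\geq 3$ and split by the lengths of $v$ and $w$: if both lie in $\Ag^2$ then $vw = 0$ by \eqref{met}; if $v = x_j$ is a generator and $w \in \Ag^2$, substituting $vw = w L_{x_j}$ recasts the two summands
\[
(x_i \cdot vw) x_\ell = (vw)\, L_{x_i} R_{x_\ell}, \qquad x_i \cdot ((vw) x_\ell) = (vw)\, R_{x_\ell} L_{x_i}
\]
as $w L_{x_j} L_{x_i} R_{x_\ell}$ and $w L_{x_j} R_{x_\ell} L_{x_i}$, which vanish by \eqref{LLR=0} and \eqref{LRL=0} respectively; the symmetric case $v \in \Ag^2$, $w = x_k$ yields $v R_{x_k} L_{x_i} R_{x_\ell}$ and $v R_{x_k} R_{x_\ell} L_{x_i}$, killed by \eqref{RLR=0} and \eqref{RRL=0}. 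Hence $v, w$ are also forced to be generators and $(u, vw, s) \in V$.

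For (ii), since every element of $V$ lies in $\Ag^2$, the metabelian identity annihilates $V$ under multiplication by any element of $\Ag^2$, so only multiplication by a generator $x_m$ needs attention. Identity \eqref{We'} gives $x_m \cdot (x_i, x_j x_k, x_\ell) = (x_i, x_m \cdot x_j x_k, x_\ell)$, whose middle slot has length $3$ and hence vanishes by step~(i). Identity \eqref{We v} expands $(x_i, x_j x_k, x_\ell) \cdot x_m$ as a linear combination of four associators, each of which either has an $\Ag^2$-element in an outer slot or a middle factor of length $\geq 3$; each summand is zero by the same step. Thus $V$ is a two-sided ideal, and the required equality follows.

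The main obstacle is the case analysis in step (i): correctly matching, for each length pattern of $v$ and $w$, the appropriate three-letter vanishing relation from the proof of Lemma~\ref{lm monom A5} that kills the corresponding operator word $\Ag^2 T_1 T_2 T_3$. Once this bookkeeping is in place, the rest of the argument flows directly from the metabelian identity and the auxiliary identities collected in Lemma~\ref{Lm_WN_def}.
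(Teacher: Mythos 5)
Your argument is correct in outline and rests on the same engine as the paper's proof, namely the vanishing relations \eqref{RRL=0}--\eqref{LRR=0} established in Lemma~\ref{lm monom A5}, but it is organized differently and one justification is misattributed. The paper's proof is a two-line observation: the generator associators $(x_i,x_jx_k,x_\ell)=wL_{x_i}R_{x_\ell}-wR_{x_\ell}L_{x_i}$ (with $w=x_jx_k$) lie in the annihilator of~$\Ag$, since appending any $L_{x_m}$ or $R_{x_m}$ produces a degree-$5$ word of type $LRL$, $RLL$, $LRR$ or $RLR$, all killed by Lemma~\ref{lm monom A5}, while products with $\Ag^2$ die by~\eqref{met}; this makes your step~(ii) trivial and dispenses with~\eqref{We'} and~\eqref{We v} entirely. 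Your step~(i) coincides with the paper's remaining observation that substitutions of $\Ag^2$-elements into any slot give zero, and your case analysis there is complete and correctly matched to the relevant relations. The one place where your write-up is not self-supporting is the term $\bigl(x_i(x_jx_k),x_\ell,x_m\bigr)$ arising from the \eqref{We v} expansion in step~(ii): you claim it vanishes ``by the same step'' because it has an $\Ag^2$-element in an outer slot, but your outer-slot argument in step~(i) used the metabelian identity together with the fact that the \emph{middle} slot lies in $\Ag^2$, which fails here since the middle slot is the generator $x_\ell$. The term is indeed zero, but because $\bigl(x_i(x_jx_k)\bigr)R_{x_\ell}R_{x_m}=(x_jx_k)L_{x_i}R_{x_\ell}R_{x_m}$ vanishes by~\eqref{LRR=0} while the remaining summand dies by~\eqref{met}; you need to cite that relation explicitly, or simply adopt the annihilator observation, which covers this case wholesale.
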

\begin{proof}
It's enough to notice that,
by Lemma~\ref{lm monom A5},
the associators
$(x_i,x_jx_k,x_{\ell})$
lie in the annihilator of~$\Ag$
and all associators of the form 
\[
(w,x_jx_k,x_{\ell}),\quad (x_i,wx_k,x_{\ell}),\quad (x_i,x_jw,x_{\ell}), \quad(x_i,x_jx_k,w),
\]
for 
$w\in {\Ag}^2$,
are null ones.
\end{proof}

\subsection{Pre-base}

Recall that a function
$f(x,y)$ 
is said to be symmetric w.r.t.~$x,y$ if
\[
f(x,y)=f(y,x).
\]

\begin{definition}
{\normalshape
The \textit{base elements of $\Ag^3$} are all elements  \textit{of types}~\eqref{bwt-i}--\eqref{bwt-v} defined by the list below, for $x,y,z,t_1,\dots,t_k\in X$:
\begin{list}{(\roman{bwt})}{\usecounter{bwt}}
\item\label{bwt-i} 
$x(yz)$,
\item\label{bwt-ii} 
$(x,t_1,t_2)$,
\item\label{bwt-iii} 
$\left(x,yt_1,t_2\right)$,
\item\label{bwt-iv} 
$\Tch(x,t_1,t_2,t_3)$,
\item\label{bwt-v} 
$\left(xt_1\right)R_{t_2}\ldots R_{t_k}$,
\end{list}
where the indices~$t_i$ are used each time when the corresponding element possesses, by definition, the symmetry property w.r.t. all the subset $\left\{t_i\right\}$ of its variables.}
\end{definition}
\begin{lemma}\label{lm pre-base}
The ideal~$\Ag^3$ is spanned by its base elements. 
\end{lemma}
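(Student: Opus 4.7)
The strategy is to show that every $\WLC$-basis monomial from Theorem~\ref{Th-Base-WLC} lying in $\Ag^3$ --- i.e., of the form $x_iL_{j_1}\cdots L_{j_n}R_{k_1}\cdots R_{k_t}$ with $n+t\geqslant 2$ --- can be rewritten (modulo the metabelian identity) as a linear combination of base elements of types~(i)--(v). I would induct on the total length $\ell=n+t+1$, using the identities of Lemma~\ref{Lm_WN_def} and the vanishing results of Lemma~\ref{lm monom A5} as the main tools.

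For $\ell=3$ the three available patterns $(xa)b=xR_aR_b$, $(ax)b=xL_aR_b$ (both type~(v) with $k=2$), and $b(ax)=xL_aL_b$ (type~(i)) are already base elements. For $\ell=4$ I would treat the eight patterns $xT_aT_bT_c$ individually, using identities~\eqref{We'}--\eqref{We v}. A representative reduction is
\[
xR_aR_bL_c=c\bigl((xa)b\bigr)=c(x,a,b)+c\bigl(x(ab)\bigr)=(x,a,cb)-(c,ax,b)
\]
by~\eqref{WN} and~\eqref{We'''}, producing two type-(iii) elements. The pattern $xL_aL_bR_c$, via~\eqref{We v}, yields a $\Tch$-term (type~(iv)) plus type-(iii) and type-(v) components; the remaining cases are analogous.

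For $\ell\geqslant 5$, Lemma~\ref{lm monom A5} forces the shape $\mu=wR_xR_yR_z$ with $w\in\Ag^2$. When $\ell=5$ the element $w$ has length exactly $2$, so $\mu$ is type~(v) with $k=4$. For $\ell\geqslant 6$, I would decompose $w=w_2+w'$ with $w_2$ of length~$2$ and $w'\in\Ag^3$; the $w_2$-part is again type~(v), while $w'$ is, by the inductive hypothesis, a linear combination of base elements. It then suffices to verify the following closure property: for any base element $b$ of type~(i)--(v) and $t\in X$, the product $b\cdot t$ lies in the base span. This amounts to iterated applications of~\eqref{We'v}, \eqref{We v}, and Teichm\"uller's identity, with the metabelian identity killing $\Ag^2\cdot\Ag^2$-terms at each step.

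The main technical obstacle is exactly this closure property. For $b$ of types~(iii) or~(iv), the expansion of $b\cdot t$ introduces intermediate associators with compound entries such as $(u,(vw)t,x)$ or $\bigl(u(vw),t,x\bigr)$ which are not themselves base elements; these must be collapsed back to types~(iii)--(v) via~\eqref{We''}, \eqref{WN}, and the $\Ag^5$-vanishing relations~\eqref{RRL=0}--\eqref{LRR=0} from the proof of Lemma~\ref{lm monom A5}. Dimensional consistency --- specifically, that $\Ag^5$ is spanned by monomials of the shape $wR^3$ with $w$ of length~$2$ --- guarantees that these collapses terminate, but the explicit bookkeeping is what constitutes the bulk of the proof.
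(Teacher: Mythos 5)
Your overall route --- rewriting the $\WLC$-spanning monomials of Theorem~\ref{Th-Base-WLC} degree by degree using Lemma~\ref{Lm_WN_def} and Lemma~\ref{lm monom A5} --- is viable and close in spirit to the paper's argument, but three concrete points go wrong or are left undone. First, in degree~$3$ you declare $(xa)b$ and $(ax)b$ to be base elements of type~(v) with $k=2$. They are not: a type-(v) element must be symmetric in all of its $t_i$, whereas $(xa)b-(xb)a=x(ab)-x(ba)=x[a,b]\neq 0$ by right symmetry. These products have to be split as $(xa)b=(x,a,b)+x(ab)$, i.e.\ into types~(ii) and~(i); type~(v) only enters in degree~$\geqslant 5$.

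Second, your treatment of degree~$\geqslant 6$ via an inductive ``closure property'' (that $b\cdot t$ stays in the span for every base element $b$) is both unnecessary and, by your own admission, not carried out --- and it is exactly the bookkeeping the paper avoids. The relations~\eqref{RRL=0}--\eqref{LRR=0} hold for every $w\in\Ag^2$, and the metabelian identity already reduces any monomial to a degree-$2$ core followed by a string of multiplication operators; iterating those relations therefore forces every nonzero monomial of degree~$\geqslant 5$ directly into the form $(x_ix_j)R_{k_1}\cdots R_{k_t}$ with $t\geqslant 3$, which is literally of type~(v). There is no inner factor of degree~$\geqslant 3$ left to induct on. Finally, you never establish the symmetry properties that the definition of ``base element'' carries and that the paper proves as part of this lemma: the symmetry of type~(iii) in $t_1,t_2$ via~\eqref{We'}, the symmetry of $\Tch$ in $t_1,t_2,t_3$ via~\eqref{We v}, and --- crucially --- the full symmetry of $(xt_1)R_{t_2}\cdots R_{t_k}$ in all the $t_i$ \emph{including} $t_1$, which requires the identity $\Tch(x,t_1,t_2,t_3)\cdot y=(xt_1)R_{t_2}R_{t_3}R_y$ coming from Lemma~\ref{lm Asocc 4 weak id} together with $wR_yR_z=wR_zR_y$ for $w\in\Ag^2$. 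Without these verifications the spanning set you produce is not yet the set of base elements the lemma refers to.
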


\begin{proof}
First,
taking into account the defining identity~\eqref{e-symm} for $\We$,
it's not hard to see that the subspace of polynomials of degree~$3$ in~$\Ag$ is spanned by the elements of types~\eqref{bwt-i} and~\eqref{bwt-ii}, in view of the trivial equality
\[
(xy)z=(x,y,z)+x(yz).
\]
Further, identities~\eqref{We'},~\eqref{We'''}--\eqref{We v}, yield that
the subspace of polynomials of degree~$4$ in~$\Ag$ is spanned by the elements of types~\eqref{bwt-iii} and~\eqref{bwt-iv}.
While that the symmetry property established for the elements of type~\eqref{bwt-iii} is a direct consequence of identity~\eqref{We'}. 
At the same time,
in the case of elements of type~\eqref{bwt-iv}, the required symmetry property can be obtained as follows.
On one hand, the polynomial 
$\Tch(x,y,z,t)$
is defined as an symmetric element of~$z,t$.
On the other hand, identity~\eqref{We v} implies 
the symmetry of $\Tch(x,y,z,t)$ with respect to~$y,z$.

Furthermore, by Lemma~\ref{lm monom A5},
the subspace of polynomials of degree~$\geqslant 5$ in~$\Ag$ is spanned by
$R$-words.
Moreover, combining the defining identities~\eqref{e-symm} and~\eqref{met} of $\We$, we have
\[
wR_yR_z=wR_zR_y,\quad w\in \Ag^2.
\]
Finally, Lemma~\ref{lm Asocc 4 weak id} implies
\[
\Tch(x,t_1,t_2,t_3)\cdot y=\left(x t_1\right)R_{t_2}R_{t_3}R_y.
\]
This proves the required symmetry property of the elements of type~\eqref{bwt-v}.
\end{proof}

\subsection{Base}

\begin{theorem}\label{Thrm Base}
The free algebra 
$\Ag$ 
possesses a base formed by all elements from~$X$,~$X^2$, and all base elements of $\Ag^3$
equipped with the multiplication~$\cdot$ introduced by the following rules: 
\begin{itemize}
\item 
Every nonzero left action of a generator
$x\in X$ 
on a base element is defined by one of the following formulas:
\begin{align*}
x\cdot y&=xy,\\
x\cdot yz&=x(yz),\\ 
x\cdot y(zt)&=-(x,zy,t),\\ 
x\cdot(y,t_1,t_2)&=\left(y,xt_1,t_2\right),
\end{align*}
where $y,z,t,t_1,t_2\in X$.
\item
Every nonzero right action of a generator
$y\in X$ 
on a base element of degree~$\geqslant2$ is defined by one of the following formulas:
\begin{align*}
xz\cdot y&=(x,z,y)+x(zy),\\
x(zt)\cdot y&=\bigl(x,[z,t],y\bigr)+(z,xt,y),\\
\left(x,t_1,t_2\right)\cdot y&=\Tch(x,t_1,t_2,y)+\bigl(x,t_1\circ t_2,y\bigr),\\
\Tch(x,t_1,t_2,t_3)\cdot y&=\left(x t_1\right)R_{t_2}R_{t_3}R_y,\\
\left(x t_1\right)R_{t_2}\ldots R_{t_k}\cdot y&=
\left(x t_1\right)R_{t_2}\ldots R_{t_k}R_y,\quad k\geqslant 4,
\end{align*}
where $x,z,t,t_1,\dots,t_k\in X$.
\item
All the other products of base elements not listed in the formulas above are null once by definition.
\end{itemize}

\end{theorem}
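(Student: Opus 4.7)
The plan is to realize $\Ag$ as an explicitly constructed model algebra $\mathcal{B}$ whose underlying vector space has a formal basis in bijection with the proposed spanning set $B$ of Lemma~\ref{lm pre-base}, and whose multiplication is given by the table in the statement. Since $B$ already spans $\Ag$ by that lemma, what remains is the linear independence of $B$ and the veracity of the multiplication rules. The strategy is the standard model-algebra argument: prove that $\mathcal{B}\in\We$, so that the universal property of $F_{\We}\langle X\rangle$ yields a surjective homomorphism $\varphi\colon\Ag\to\mathcal{B}$ sending generators to generators. Since the formal basis elements of $\mathcal{B}$ are linearly independent by construction and $\varphi$ maps $B$ bijectively onto this basis, $\varphi$ must be an isomorphism, yielding both the linear independence of $B$ and the claimed multiplication table.

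Before extending the rules bilinearly, one must verify that every right-hand side is well-defined on the formal basis, i.e.\ respects the symmetry built into the base elements of $\Ag^3$. For instance, the rule $x\cdot(y,t_1,t_2)=(y,xt_1,t_2)$ produces a type~\eqref{bwt-iii} element, and its symmetry in $t_1,t_2$ is precisely the one established in the proof of Lemma~\ref{lm pre-base} via~\eqref{We'} and~\eqref{e-symm}. Analogous consistency checks must be carried out for the right actions producing type~\eqref{bwt-iv} $\Tch$-elements and type~\eqref{bwt-v} $R$-words.

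Next, I would verify that $\mathcal{B}$ satisfies~\eqref{met},~\eqref{e-symm}, and~\eqref{WN}. The metabelian identity~\eqref{met} is immediate since every product of two basis elements of degree~$\geqslant 2$ is set to zero. The right symmetry~\eqref{e-symm} and weakly Novikov identity~\eqref{WN} require a case analysis on the types of the three arguments, but the multiplication rules were engineered to mirror exactly the identities~\eqref{We'}--\eqref{We v} of Lemma~\ref{Lm_WN_def}, while Lemma~\ref{lm monom A5} guarantees that all degree-$\geqslant 5$ monomials collapse into the normal form of type~\eqref{bwt-v}. The identities should therefore reduce to tautologies within $\mathcal{B}$ once the rules are systematically unwound.

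The main obstacle will be the exhaustive case check for~\eqref{WN} across all type combinations. The most delicate subcases involve associators with a product in the middle argument interacting with degree-$\geqslant 3$ elements; here Lemma~\ref{lm Asocc 4 weak id} is the key tool, ensuring that such associators always reduce to type~\eqref{bwt-v} form. Once all subcases are verified, the universal-property argument closes the proof.
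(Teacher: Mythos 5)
Your proposal is correct and follows essentially the same route as the paper: the authors likewise take the span $\Aeb$ of the base elements with the given multiplication, note that the table is well-defined (respects the built-in symmetries), verify \eqref{met}, \eqref{e-symm}, and \eqref{WN} in $\Aeb$ by a case analysis on the types of the arguments, and conclude via the universal property combined with Lemma~\ref{lm pre-base} that $\Ag\cong\Aeb$. The only point to make sure of when executing the plan is that the case analysis is genuinely exhaustive — the paper organizes it by isolating the elements $w$ not annihilated by $X$ on either side, which is the same reduction you indicate via Lemmas~\ref{lm monom A5} and~\ref{lm Asocc 4 weak id}.
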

\begin{proof}
First we stress that the multiplication~$\cdot$ is well-defined.
Indeed, by the direct verification, one can confirm that 
all symmetry properties,
required for the elements of types~\eqref{bwt-ii}--\eqref{bwt-v},
are inherited by the corresponding right parts in formulas of multiplication table.

Let $\Aeb$ be the span of all introduced base elements over the field~$F$. Consider $\Aeb$ as an algebra with respect to the multiplication~$\cdot$ defined by the rule of Theorem~\ref{Thrm Base}.
Then, Lemma~\ref{lm pre-base} yields that $\Ag$ could be isomorphic to some quotient of
$\Aeb$.
Let us prove that actually $\Ag$ is isomorphic to $\Aeb$ under the isomorphism induced by the identical mapping
$X\mapsto X$.

Our proof is by formal checking of defining identities~\eqref{WN}--\eqref{met} of the variety~$\We$ for the algebra $\Aeb$. 
First, metabelian identity~\eqref{met} holds in~$\Aeb$ immediately, by definition of the multiplication~$\cdot$.
Further, let us set
\[
\bigl(a,b,c\bigr)_{\Aeb}\stackrel{\mathrm{def}}{=}(a\cdot b)\cdot c-a\cdot (b\cdot c).
\]
In what follows, we assume that
$x,y,z,t,u$ are variables from $X$ and $w$ is a common denotation for base elements of $\Aeb$ such that
neither 
$w$ nor $X\cdot w$, $w\cdot X$
lie in the annihilator
\[
\mathrm{Annh}(\Aeb)=F\cdot (X,X^2,X) 
\]
of $\Aeb$, i.~e.
\[
w\in \bigl\{x,\,xy,\, (x,y,z),\,\Tch(x,y,z,t),\, (xy)R_zR_t\dots R_u\bigr\}.
\]
Let us check the right symmetry of $\Aeb$: 
\[
\bigl(a,b,c\bigr)_{\Aeb}=\bigl(a,c,b\bigr)_{\Aeb}.
\]
First, we compute all nonzero associators of type
$\bigl(w,b,c\bigr)_{\Aeb}$, for $b,c\in X$,
and verify that the obtained elements are 
symmetric in~$b,c$.
Indeed,
\begin{align*}
\bigl(x,b,c\bigr)_{\Aeb}
&=xb\cdot c-x\cdot bc=(x,b,c)+x(bc)-x(bc)=(x,b,c),\\
\bigl(xy,b,c\bigr)_{\Aeb}
&=\bigl(xy\cdot b\bigr)\cdot c=\bigl((x,y,b)+x(yb)\bigr)\cdot c=\\
&=\Tch(x,y,b,c)+\bigl(x,y\circ b,c\bigr)
+\bigl(x,[y,b],c\bigr)+(y,xb,c)=\\
&=\Tch(x,y,b,c)+\bigl(x,yb,c\bigr)
+\bigl(x,yb,c\bigr)+(y,xb,c),\\
\bigl((x,y,z),b,c\bigr)_{\Aeb}
&=\bigl((x,y,z)\cdot b\bigr)\cdot c=\\
&=\Bigl(\Tch(x,y,z,b)+\bigl(x,y\circ z,b\bigr)\Bigr)\cdot c=(xy)R_zR_bR_c,\\
\bigl(\Tch(x,y,z,t),b,c\bigr)_{\Aeb}
&=\bigl(\Tch(x,y,z,t)\cdot b\bigr)\cdot c=(xy)R_zR_tR_bR_c,\\
\bigl((xy)R_zR_t\dots R_u,b,c\bigr)_{\Aeb}
&=\bigl((xy)R_zR_t\dots R_u\cdot b\bigr)\cdot c=(xy)R_zR_t\dots R_uR_bR_c.
\end{align*}
Further, we compare the values of associators in pairs of types
\[
\bigl(a,b,w\bigr)_{\Aeb}\,,\quad  \bigl(a,w,b\bigr)_{\Aeb}\,,\quad a,b\in X,\quad w\notin X.
\]
In the case $w\in X^2$, we have
\begin{align*}
\bigl(a,b,xy\bigr)_{\Aeb}
&=-a\cdot(b\cdot xy)
=-a\cdot b(xy)=(a,xb,y)
=(a,xy,b),\\
\bigl(a,xy,b\bigr)_{\Aeb}
&=(a\cdot xy)\cdot b-a\cdot (xy\cdot b)=a(xy)\cdot b-a\cdot\bigl((x,y,b)+x(yb)\bigr)=\\
&=\bigl(a,[x,y],b\bigr)+(x,ay,b)-(x,ay,b)+(a,yx,b)
=(a,xy,b).
\end{align*}
Furthermore, let ${\Aeb}^n$ be the ideal of
$\Aeb$ spanned by all base elements of degree~$\geqslant n$. 
We observe that all terms of associators of types
\[
\bigl(X,X,{\Aeb}^3\bigr)_{\Aeb},\quad
\bigl(X,{\Aeb}^3,X\bigr)_{\Aeb}
\]
are null, by virtue of the relations
\[
X\cdot {\Aeb}^4=0,\quad
X\cdot (X^2X)\subseteq\mathrm{Annh}(\Aeb),\quad
X\cdot (X,X,X)\subseteq\mathrm{Annh}(\Aeb).
\]
Therefore,~$\Aeb$ is right symmetric.

Finally, to complete the proof, we verify that $\Aeb$ is weakly Novikov. 
Indeed, in view of the relations above, all terms of the identity
\[
a\cdot\bigl(b,c,d\bigr)_{\Aeb}=\bigl(b,c,a\cdot d\bigr)_{\Aeb}\,
\]
are null whenever at least one of its variables $a,b,c,d$ takes a value that doesn't lie in~$X$.  
Otherwise, we have,
\begin{align*}
x\cdot\bigl(y,z,t\bigr)_{\Aeb}&=x\cdot(y,z,t)=(y,xz,t),\\
\bigl(y,z,xt\bigr)_{\Aeb}&=-y\cdot(z\cdot xt)=-y\cdot z(xt)=(y,xz,t).
\end{align*}
\end{proof}

\section{Corollaries}

We stress that combining Lemma~\ref{lm monom A5} and 
the established above symmetry property for $R$-words
\[
wR_yR_z=wR_zR_y,\quad w\in \Ag^2
\]
with the Medvedev's two-term identity theorem~\cite{Medvedev78},
one can prove that the variety
$\We$ is Spechtian~(see, also,~\cite{kuz2008spechtian,platonova2004varieties,pchelintsev2008identities,Umirbaev85}).

In this section, we describe defining identities for nilpotent and non-nilpotent subvarieties of~$\We$.

\subsection{Nilpotent subvarieties}

\begin{lemma}\label{Coroll_deg5}
Every proper subvariety of $\We$ distinguished by some multilinear identity of degree~$n\geqslant 5$ is nilpotent of index not more then~$n+1$.
\end{lemma}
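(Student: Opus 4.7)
The plan is to show that the $T$-ideal $I=\langle f\rangle^{\mathrm{T}}\subseteq\Ag$ contains $\Ag^{n+1}$, after which standard specialization delivers the required nilpotency.

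First I would refine the description of the multilinear part $P_m(\Ag)$ (the space of multilinear polynomials of degree $m$ in distinct generators $x_1,\ldots,x_m$) for $m\geqslant 5$. The symmetry of $\Tch(x,t_1,t_2,t_3)$ in the three arguments $t_1,t_2,t_3$ (from~\eqref{We v} together with the built-in $z,t$-symmetry) combined with $\Tch(x,t_1,t_2,t_3)\cdot y=(xt_1)R_{t_2}R_{t_3}R_y$ and the commutativity $wR_aR_b=wR_bR_a$ for $w\in\Ag^2$ forces the type~\eqref{bwt-v} base element $(xt_1)R_{t_2}\ldots R_{t_{m-1}}$ to be symmetric in \emph{all} of $t_1,\ldots,t_{m-1}$. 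Consequently, by Theorem~\ref{Thrm Base}, $P_m(\Ag)$ has dimension exactly $m$, with one basis element $e^{(m)}_{j}$ for each choice of the ``first variable'' $x_j$ (the remaining $m-1$ variables contribute only as a multiset).

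Write $f=\sum_{j=1}^{n}\alpha_j\,e^{(n)}_{j}$ with some $\alpha_j\neq 0$ and fix such a $j$. Consider the endomorphism $\phi\in\mathrm{End}(\Ag)$ sending $x_j\mapsto x_j x_{n+1}$ and fixing the other generators. The heart of the argument is the claim $\phi\bigl(e^{(n)}_{i}\bigr)=\delta_{ij}\,e^{(n+1)}_{j}$. For $i\neq j$, using the representation $e^{(n)}_{i}=(x_i x_j)R_{x_{k_1}}\ldots R_{x_{k_{n-2}}}$, the leading factor becomes $x_i(x_jx_{n+1})$ of type~\eqref{bwt-i}; the right-multiplication by $x_{k_1}$ (via $x(zt)\cdot y=(x,[z,t],y)+(z,xt,y)$) yields only type~\eqref{bwt-iii} associators, which are annihilated by the next right-multiplication thanks to Lemma~\ref{lm Asocc 4 weak id}. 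For $i=j$, take $e^{(n)}_{j}=(x_jx_{k_0})R_{x_{k_1}}\ldots R_{x_{k_{n-2}}}$ and expand $(x_jx_{n+1})x_{k_0}=(x_j,x_{n+1},x_{k_0})+x_j(x_{n+1}x_{k_0})$; the type~\eqref{bwt-i} summand dies as above, whereas the type~\eqref{bwt-ii} summand evolves by $(x,t_1,t_2)\cdot y=\Tch(x,t_1,t_2,y)+(x,t_1\circ t_2,y)$ and then $\Tch(x,t_1,t_2,t_3)\cdot y=(xt_1)R_{t_2}R_{t_3}R_y$ into $(x_jx_{n+1})R_{x_{k_0}}R_{x_{k_1}}R_{x_{k_2}}$ (the intermediate type~\eqref{bwt-iii} pieces again being killed by the annihilator). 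Iterating the type~\eqref{bwt-v} multiplication rule of Theorem~\ref{Thrm Base} with $x_{k_3},\ldots,x_{k_{n-2}}$ completes this to $e^{(n+1)}_{j}$. Hence $\phi(f)=\alpha_j\,e^{(n+1)}_{j}\in I$, giving $e^{(n+1)}_{j}\in I$.

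Finally, $I$ is stable under arbitrary permutations of generators, and the $S_{n+1}$-orbit of $e^{(n+1)}_{j}$ is the full basis $\{e^{(n+1)}_{k}\}_{k=1}^{n+1}$ of $P_{n+1}(\Ag)$; hence $P_{n+1}(\Ag)\subseteq I$, and specialising the generators to arbitrary elements of $\Ag$ yields $\Ag^{n+1}\subseteq I$ as desired. The main obstacle is the bookkeeping of the substitution in the second step: one must confirm that at every stage of the right-multiplication cascade, only type~\eqref{bwt-iii} associators appear off the ``$\Tch$-trajectory'' so that Lemma~\ref{lm Asocc 4 weak id} kills them before they can re-enter the type~\eqref{bwt-v} output. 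The hypothesis $n\geqslant 5$ (equivalently $n-2\geqslant 3$) enters precisely here, to leave room for both the type~\eqref{bwt-iii} annihilator step and the subsequent type~\eqref{bwt-v} build-up; it is also essential for the dimension-$m$ collapse of $P_m(\Ag)$, without which the substitution $x_j\to x_j x_{n+1}$ would extract only a partial projection of the coefficients of $f$.
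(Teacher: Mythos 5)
Your proposal is correct and follows essentially the same route as the paper: write the multilinear component of degree $n\geqslant 5$ in the $n$-element spanning set indexed by the leading variable, substitute a product of two generators for a variable carrying a nonzero coefficient, and observe that exactly one term survives and equals a spanning element of degree $n+1$. The only difference is bookkeeping: where you cascade through the multiplication table of Theorem~\ref{Thrm Base} to kill the off-diagonal terms, the paper gets the same vanishing in one stroke from Lemma~\ref{lm monom A5} (a monomial of degree $\geqslant 5$ survives only as an $R$-word on an element of $\Ag^2$, and $R_w$ with $w\in\Ag^2$ annihilates $\Ag^2$ by metabelianity).
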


\begin{proof}
Consider a multilinear polynomial 
$f=f(x_1,\dots,x_n)\in \Ag$
on~$n\geqslant 5$ variables.
By Theorem~\ref{Thrm Base},
$f$
can be written down as
\[
f=\sum_{i=1}^n
\lambda_i\left(x_ix_{\sigma_i(2)}\right)R_{x_{\sigma_i(3)}}\ldots R_{x_{\sigma_i(n)}},
\]
where all
$\lambda_i\in F$
and every
$\sigma_i$
is a permutation on the set
$\{1,2,\dots,n\}$
defined by the rule
\begin{equation}\label{property sigma-i}
\sigma_i(1)=i,\quad \sigma_i(2)<\sigma_i(3)<\dots<\sigma_i(n).
\end{equation}
In other words, rule~\eqref{property sigma-i} states that
$\sigma_1=\mathrm{id}$
and
\[
\sigma_i={(1\,2\dots i)}^{-1}, \text{ for } i=2,3,\dots,n.
\]
Suppose that $\mathcal V_f$ is a proper subvariety of $\We$ distinguished by
the identity
$f$.
Then, with no loss of generality, one may assume 
$\lambda_1=1$ 
and set
$x_1=w\in X^2$.
Hence, by Lemma~\ref{lm monom A5},
\[
f(w,x_2\dots,x_n)=wR_{x_2}\dots R_{x_n}.
\]
Therefore, by Theorem~\ref{Thrm Base},
$\mathcal V_f$
is nilpotent of index not more than~$n+1$.
\end{proof}

\begin{remark}
{\upshape In particular, Lemma~\ref{Coroll_deg5} states that the variety 
$\We$ 
has the topological rank\footnote{See~\cite{Drensky-Rashkova89,kuz2015topol,Pchelintsev81,platonova2006varieties}.}
$r_t\bigl(\We\bigr)=2$.}
\end{remark}

\begin{lemma}\label{Coroll_deg2}
Every proper subvariety of $\We$ distinguished by some multilinear identity of degree~$2$ is nilpotent of index not more then~$5$.
\end{lemma}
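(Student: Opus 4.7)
The plan is to first classify degree-$2$ multilinear identities up to scalars, obtaining a short list of canonical forms, and then to verify $\Ag_f^5=0$ in each case using the structural results of Section~3, where $\Ag_f$ denotes the free algebra of the subvariety~$\mathcal V_f$.

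Writing a nontrivial multilinear identity of degree~$2$ as $f=\alpha x_1x_2+\beta x_2x_1$ with $(\alpha,\beta)\neq(0,0)$ and pairing it with the identity obtained by interchanging $x_1$ and $x_2$, one gets a $2\times 2$ linear system in the base elements $x_1x_2$ and $x_2x_1$ with determinant $\alpha^2-\beta^2$. Thus the first step is a three-way split: either $\alpha^2\neq\beta^2$, in which case $x_1x_2=x_2x_1=0$ and $\Ag_f^2=0$ (nilpotency of index~$2$); or $\alpha=\beta\neq 0$, giving commutativity $xy=yx$; or $\alpha=-\beta\neq 0$, giving anticommutativity $xy+yx=0$. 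The first case is immediate.

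For the commutative and anticommutative cases, I would invoke Lemma~\ref{lm pre-base} together with clause~\eqref{bwt-v} of the pre-base classification to reduce everything to showing that every base monomial $(x_ix_j)R_{y_1}R_{y_2}R_{y_3}$ of $\Ag^5$ vanishes in $\Ag_f$. Applying the identity $ab=\pm ba$ with $a=(x_ix_j)R_{y_1}R_{y_2}\in\Ag^4$ and $b=y_3\in X$ gives
\[
(x_ix_j)R_{y_1}R_{y_2}R_{y_3}=\pm\, y_3\cdot\bigl((x_ix_j)R_{y_1}R_{y_2}\bigr)=\pm\,(x_ix_j)R_{y_1}R_{y_2}L_{y_3},
\]
and the right-hand side is already zero in $\Ag$ by relation~\eqref{RRL=0} from the proof of Lemma~\ref{lm monom A5}. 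Hence every basis monomial of degree~$5$ in $\Ag_f$ is zero, so $\mathcal V_f$ is nilpotent of index at most~$5$.

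The only point requiring care is that we mix an identity from $\mathcal V_f$ with a vanishing relation inherited from the ambient variety $\We$; this is legitimate because~\eqref{RRL=0} is derived purely from the defining identities~\eqref{WN} and~\eqref{met}, which continue to hold in $\mathcal V_f\subseteq\We$. Beyond this small bookkeeping check, the argument is essentially mechanical, and the bulk of the work has already been carried out in Lemmas~\ref{lm monom A5} and~\ref{lm pre-base}.
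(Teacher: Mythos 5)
Your proposal is correct and follows essentially the same route as the paper: both arguments kill the degree-$5$ monomials $(x_ix_j)R_{y_1}R_{y_2}R_{y_3}$ by trading the last right multiplication for a left one and invoking $\Ag^2R_xR_yL_z=0$ (relation~\eqref{RRL=0}); the paper just does this in one stroke by substituting $x_1=(x_1x_3)R_{x_4}R_{x_5}$ into the normalized $f=x_1x_2+\lambda\,x_2x_1$, so your three-way determinant case split is harmless but unnecessary. One small slip: your labels are interchanged ($\alpha=\beta$ yields anticommutativity $xy+yx=0$ and $\alpha=-\beta$ yields commutativity), but since you only ever use $ab=\pm ba$, this does not affect the argument.
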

\begin{proof}
Indeed, for
\[
f(x_1,x_2)=x_1x_2+\lambda\,x_2x_1,\quad \lambda\in F,
\]
in view of~\eqref{RLL=0}, we have
\[
f\bigl((x_1x_3)R_{x_4}R_{x_5},x_2\bigr)=(x_1x_3)R_{x_4}R_{x_5}R_{x_2}.
\]
Hence, Theorem~\ref{Thrm Base} yield that a subvariety of $\We$ distinguished by $f$ should be nilpotent of index not more then~$5$.
\end{proof}

\subsection{Non-nilpotent subvarieties}

\begin{lemma}\label{Coroll_deg4}
If $\mathcal V_f$ is a proper non-nilpotent subvariety of $\We$ distinguished by some multilinear identity 
$f=f(x_1,x_2,x_3,x_4)$ of degree~$4$, 
then $f$ should have the form
\[
f=\sum_{\delta\in\mathrm{A}_4}
\lambda_{\delta}\left(x_{\delta(1)},x_{\delta(2)}x_{\delta(3)},x_{\delta(4)}\right),
\]
where
all
$\lambda_{\delta}\in F$
and
$\mathrm{A}_4$
is the alternating group on the set 
$\{1,2,3,4\}$.
\end{lemma}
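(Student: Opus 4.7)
The plan is to apply Lemma~\ref{lm pre-base} and then use a substitution argument. Writing $f$ in the pre-basis of degree-$4$ elements of $\Ag$, we have
\[
f = \sum_{\delta \in \mathrm{A}_4} \lambda_\delta \bigl(x_{\delta(1)}, x_{\delta(2)} x_{\delta(3)}, x_{\delta(4)}\bigr) + \sum_{i=1}^{4} \mu_i \Tch(x_i, \ldots),
\]
where the first sum enumerates the twelve base elements of type~\eqref{bwt-iii} (the group $\mathrm{A}_4$ selects a unique representative in each coset of the $t_1 \leftrightarrow t_2$ symmetry, since $|S_4|/2 = |\mathrm{A}_4| = 12$) and the second sum enumerates the four base elements of type~\eqref{bwt-iv} with distinguished variable $x_i$. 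The lemma reduces to showing that $\mu_1 = \mu_2 = \mu_3 = \mu_4 = 0$ whenever $\mathcal V_f$ is non-nilpotent.

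Suppose for contradiction that $\mu_1 \neq 0$ (after relabeling if necessary), and substitute $x_1 \mapsto x_1 x_5$ in $f$ with $x_5$ a fresh generator. The resulting degree-$5$ identity $f(x_1 x_5, x_2, x_3, x_4) = 0$ holds in $\mathcal V_f$. Since the multiplication table of Theorem~\ref{Thrm Base} lists only products involving a generator factor, non-generators act trivially on every base element; together with the metabelian identity and Lemma~\ref{lm Asocc 4 weak id} (which makes null every associator $(a,bc,d)$ with an entry in $\Ag^2$), this permits a three-step reduction. First, every summand of type~\eqref{bwt-iii} vanishes after the substitution, since one of its associator slots now contains $x_1 x_5 \in \Ag^2$. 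Second, for each $i \neq 1$, expanding $\Tch(x_i, \ldots)$ with $x_1 x_5$ in a symmetric slot via $\Tch(x,y,z,t) = (xy,z,t) - (y,xz,t) - 2(x,yz,t)$ yields three associators: two are null by Lemma~\ref{lm Asocc 4 weak id}, and the third, of the form $(yu,v,w)$ with $u = x_1 x_5$, reduces to zero by first expanding $yu \in \Ag^3$ and noting that every resulting summand is a type-\eqref{bwt-iii} element acted upon by a generator on the right, which is null per Theorem~\ref{Thrm Base}. Third, $\Tch(x_1 x_5, x_2, x_3, x_4)$ simplifies to the nonzero type-\eqref{bwt-v} base element $(x_1 x_5) R_{x_2} R_{x_3} R_{x_4}$, in the same spirit as the relation $\Tch(x, t_1, t_2, t_3)\cdot y = (xt_1) R_{t_2} R_{t_3} R_y$ used in the proof of Lemma~\ref{lm pre-base}.

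Combining these, $f(x_1 x_5, x_2, x_3, x_4) = \mu_1 \cdot (x_1 x_5) R_{x_2} R_{x_3} R_{x_4}$, so the identity $(x_1 x_5) R_{x_2} R_{x_3} R_{x_4} = 0$ holds in $\mathcal V_f$. By relabeling generators, every multilinear type-\eqref{bwt-v} base element on five generators is forced to vanish; by Lemma~\ref{lm monom A5} these span $\Ag^5$, so $\Ag^5 = 0$ in $\mathcal V_f$, contradicting non-nilpotency. Hence every $\mu_i$ is zero and $f$ has the stated form. The main obstacle is the second step above: the $\Tch$-expansion for each $i \in \{2,3,4\}$ produces an associator $(yu, v, w)$ that is not directly covered by Lemma~\ref{lm Asocc 4 weak id} and requires the slightly longer reduction through the base multiplication table sketched above.
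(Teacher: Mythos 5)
Your proposal is correct and follows essentially the same route as the paper: decompose $f$ via the base of Theorem~\ref{Thrm Base} into the twelve type-\eqref{bwt-iii} and four type-\eqref{bwt-iv} elements, substitute an element of $X^2$ for $x_1$ when $\mu_1\neq 0$, and conclude that the surviving term $wR_{x_2}R_{x_3}R_{x_4}$ forces nilpotency. The only difference is that you spell out the term-by-term vanishing (via Lemma~\ref{lm Asocc 4 weak id} and the multiplication table) where the paper simply cites Lemma~\ref{lm monom A5}.
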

\begin{proof}
By Theorem~\ref{Thrm Base}, $f$ can be written down as
\[
f=\sum_{\delta\in\mathrm{A}_4}
\lambda_{\delta}\left(x_{\delta(1)},x_{\delta(2)}x_{\delta(3)},x_{\delta(4)}\right)
+
\sum_{i=1}^4
\mu_i\Tch(x_i,x_{\sigma_i(2)},x_{\sigma_i(3)},x_{\sigma_i(4)}),
\quad 
\lambda_{\delta},\mu_i\in F,
\]
where
$\sigma_i$ 
is the permutation defined by rule~\eqref{property sigma-i}, for $n=4$.
Suppose that at least one of the scalars~$\mu_i$ is nonzero.
Then, with no loss of generality, it's enough to fix $\mu_1=1$ and set
$x_1=w\in X^2$.
Hence, by Lemma~\ref{lm monom A5}, we get
\[
f(w,x_2,x_3,x_4)=wR_{x_2}R_{x_3}R_{x_4}.
\]
However, in view of Theorem~\ref{Thrm Base}, this contradicts with the assumption of non-nilpotency for $\mathcal V_f$.
The obtained contradiction completes the proof.
\end{proof}

\begin{lemma}\label{Coroll_deg3}
If $\mathcal V_f$ is a proper non-nilpotent subvariety of $\We$ distinguished by some multilinear identity 
$f=f(x_1,x_2,x_3)$ of degree~$3$, 
then $f$ should have the form
\[
f=\sum_{\sigma\in\mathrm{S}_3}
\lambda_{\sigma}\,x_{\sigma(1)}\left(x_{\sigma(2)}x_{\sigma(3)}\right),
\quad 
\lambda_{\sigma}\in F,
\]
where
$\mathrm{S}_3$
is the symmetric group on the set 
$\{1,2,3\}$.
\end{lemma}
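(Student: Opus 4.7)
The plan is to mirror the substitution strategy of Lemma~\ref{Coroll_deg4}: expand $f$ in the basis of Theorem~\ref{Thrm Base}, assume its associator component is nonzero, and use this to produce a degree-$4$ identity in $\mathcal V_f$ that violates the shape forced by Lemma~\ref{Coroll_deg4}, contradicting non-nilpotency.

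By Theorem~\ref{Thrm Base}, every multilinear polynomial of degree~$3$ in $\Ag$ can be written uniquely as
\[
f=\sum_{\sigma\in\mathrm{S}_3}\lambda_{\sigma}\,x_{\sigma(1)}(x_{\sigma(2)}x_{\sigma(3)})
+\sum_{i=1}^{3}\mu_i\bigl(x_i,x_{\sigma_i(2)},x_{\sigma_i(3)}\bigr),\qquad \lambda_{\sigma},\mu_i\in F,
\]
with $\sigma_i(2)<\sigma_i(3)$. The task is to show that non-nilpotency of $\mathcal V_f$ forces $\mu_1=\mu_2=\mu_3=0$. Suppose for contradiction that some $\mu_i\neq 0$; relabelling the variables if necessary, assume $\mu_1\neq 0$. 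Pick fresh generators $y,z\in X$ and form the substitution instance
\[
g(y,z,x_2,x_3):=f(yz,x_2,x_3),
\]
which is a multilinear degree-$4$ identity holding in $\mathcal V_f$.

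Next I would expand $g$ in the degree-$4$ basis of Theorem~\ref{Thrm Base} and track the coefficient of the Teichm\"uller basis element $\Tch(y,z,x_2,x_3)$. The key calculation uses $(ab)c=(a,b,c)+a(bc)$ together with \eqref{We v} and \eqref{We'v} to obtain
\[
(yz,x_2,x_3)=\Tch(y,z,x_2,x_3)+2(y,zx_2,x_3)+(z,yx_2,x_3),
\]
so the summand $\mu_1(x_1,x_2,x_3)$ contributes $\mu_1\Tch(y,z,x_2,x_3)$ to $g$. All other substituted monomials produce no Teichm\"uller component: the metabelian identity annihilates $(yz)(x_jx_k)$; identities \eqref{We'} and \eqref{We'''} reduce $x_j\bigl((yz)x_k\bigr)$ and $x_j\bigl(x_k(yz)\bigr)$ to associators of type~\eqref{bwt-iii}; and \eqref{We''} together with right symmetry rewrites $(x_j,yz,x_k)$ as $(x_j,yx_k,z)$. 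Hence the coefficient of $\Tch(y,z,x_2,x_3)$ in the basis expansion of $g$ equals $\mu_1\neq 0$.

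In particular $g$ is nonzero in $\Ag=F_{\We}\langle X\rangle$, so the subvariety $\mathcal V_g\subseteq\We$ distinguished by $g$ is proper, and $\mathcal V_f\subseteq\mathcal V_g$ because $g$ is a substitution instance of $f$. Since the basis decomposition of $g$ carries a nonzero $\Tch$-summand, $g$ does not lie in the span $\sum_{\delta\in\mathrm{A}_4}\lambda_{\delta}(x_{\delta(1)},x_{\delta(2)}x_{\delta(3)},x_{\delta(4)})$ prescribed by Lemma~\ref{Coroll_deg4} for proper non-nilpotent degree-$4$ subvarieties of $\We$. Thus $\mathcal V_g$ must be nilpotent, and so is the smaller variety $\mathcal V_f$, contradicting the hypothesis. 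The main obstacle is the bookkeeping in the previous paragraph: one must verify, through careful case-by-case application of Lemma~\ref{Lm_WN_def}, that no hidden $\Tch(y,z,x_2,x_3)$ contribution emerges from any of the six $\lambda_{\sigma}$-monomials nor from the $\mu_2,\mu_3$-associators after the substitution $x_1=yz$, since several intermediate reductions pass through associator forms before landing in basis elements.
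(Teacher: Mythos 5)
Your proof is correct, but it follows a different path from the paper's. The paper argues directly: assuming $\mu_{\mathrm{id}}\neq 0$, it substitutes the degree-three element $(x_1x_4)x_5$ for $x_1$ and uses Lemma~\ref{lm monom A5} to kill every term except the distinguished associator, obtaining $f\bigl((x_1x_4)x_5,x_2,x_3\bigr)=(x_1x_4)R_{x_5}R_{x_2}R_{x_3}$; by Theorem~\ref{Thrm Base} this identity annihilates all basis elements of degree~$\geqslant 5$, so $\mathcal V_f$ is nilpotent of index at most~$5$ --- a contradiction. You instead substitute the degree-two element $yz$, compute that the resulting degree-four consequence $g$ has $\Tch(y,z,x_2,x_3)$-coefficient equal to $\mu_1\neq0$ (your identity $(yz,x_2,x_3)=\Tch(y,z,x_2,x_3)+(z,yx_2,x_3)+2(y,zx_2,x_3)$ is just the definition of $\Tch$ rearranged, and your case check that all the other substituted terms land in types~\eqref{bwt-i}--\eqref{bwt-iii} via \eqref{met}, \eqref{e-symm}, \eqref{We'}, \eqref{We''}, \eqref{We'''} is accurate), and then apply the contrapositive of Lemma~\ref{Coroll_deg4} to $\mathcal V_g\supseteq\mathcal V_f$. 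That reduction is sound: uniqueness of the decomposition in the base of Theorem~\ref{Thrm Base} shows $g$ is not of the form permitted by Lemma~\ref{Coroll_deg4}, while $g\neq0$ makes $\mathcal V_g$ proper, hence nilpotent. Unwinding your argument (the proof of Lemma~\ref{Coroll_deg4} in turn substitutes $w\in X^2$ for $y$) reproduces the paper's composite substitution $x_1\mapsto(ab)z$, so the two proofs agree at bottom; your factorization trades the direct appeal to Lemma~\ref{lm monom A5} for the explicit $\Tch$-coefficient bookkeeping plus a dependence on the degree-four lemma.
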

\begin{proof}
Applying, as above, Theorem~\ref{Thrm Base}, we write down $f$ as
\[
f=\sum_{\sigma\in\mathrm{S}_3}
\lambda_{\sigma}\,x_{\sigma(1)}\left(x_{\sigma(2)}x_{\sigma(3)}\right)
+
\sum_{\delta\in\mathrm{C}_3}
\mu_{\delta}\left(x_{\delta(1)},x_{\delta(2)},x_{\delta(3)}\right),
\quad 
\lambda_{\sigma},\mu_{\delta}\in F,
\]
where
$\mathrm{C}_3$
is the cyclic group on the set 
$\{1,2,3\}$.
If there is at least one of the scalars~$\mu_{\delta}\neq 0$,
then, similarly to the above cases, we restrict with the assumption
$\mu_{\mathrm{id}}=1$.
Then, applying Lemma~\ref{lm monom A5}, we obtain
\[
f\bigl((x_1x_4)x_5,x_2,x_3\bigr)=(x_1x_4)R_{x_5}R_{x_2}R_{x_3}.
\]
Again by virtue of Theorem~\ref{Thrm Base}, we have the contradiction with the hypothesis of non-nilpotency of~$\mathcal V_f$. 
\end{proof}
\subsection*{Acknowledgments}%
The results of this paper were first presented at the scientific seminar of the research group "Algebra and the Universal Algebraic Geometry" of the Federal University of the Rio Grande do Norte, Brazil.  
The authors are very thankful to all participant of the seminar, especially to Nir Cohen, Arkady Tsukov, Elena V. Aladova, Alexander S. Sivatski, Alan de Araújo Guimarães, Ana Beatriz Gomez da Silva, and José Victor Gomes Teixeira for the creative work atmosphere and the constantly useful discussions.

\bibliography{reference}

\bibliographystyle{abbrvurl}

%\begin{thebibliography}{99}
%\bibitem{yen2002rings}
%C-T.~Yen, 
%Rings with generalized commutators in the nuclei,
%\textit{Tamkang Journal of Mathematics} 33 (2002), n. 4, 371--378, 
%https://doi.org/10.5556/j.tkjm.33.2002.286.

%\bibitem{ZhvlSlnShstShr_1978}
% K.~A. Zhevlakov,  A.~M. Slin'ko, I.~P. Shestakov, A.~I. Shirshov, 
% Rings that are nearly associative, (Russian) Monographs in Modern Algebra,
% "Nauka", Moscow, 1978, 431 pp.
 
%\end{thebibliography}

\end{document}